\newtheorem{lemma}{Lemma}
\newtheorem{theorem}{Theorem}
\newtheorem{definition}{Definition}
\def\msquare{\mathord{\scalerel*{\Box}{gX}}}
\def\BState{\State\hskip-\ALG@thistlm}
\renewcommand{\vec}[1]{\mathbf{#1}}
\title{\LARGE \bf Influence Models on Layered Uncertain Networks: \\ A Guaranteed-Cost Design Perspective}
\author{Siavash Alemzadeh and Mehran Mesbahi
	\thanks{The authors are with the William E. Boeing Department of
		Aeronautics and Astronautics, University of Washington, Seattle WA, USA {\tt\small \{alems,mesbahi\}@uw.edu}.
		The research of the authors has been supported by NSF grant SES-1541025 and	ARO Grant W911NF-13-1-0340.}
}
\begin{document}
	
	\maketitle
	\thispagestyle{empty}
	\pagestyle{empty}


	\begin{abstract}
		Control and estimation on large-scale social networks often necessitate the availability of models for the interactions amongst the agents.
		However characterizing accurate models of social interactions pose new challenges due to inherent complexity and unpredictability.
		Moreover, model uncertainty becomes more pronounced for large-scale networks.
		For certain classes of social networks, the layering structure allows a compositional approach.
		In this paper, we present such an approach to determine performance guarantees on layered networks with inherent model uncertainties.
		A factorization method is used to determine robust stability and performance and this is accomplished by a layered cost-guaranteed design via a layered
		Riccati-type solver, mirroring the network structure.
		We provide an example of the proposed methodology in the context of opinion dynamics on large-scale social networks.
		
		\noindent \\ Keywords: \emph{social networks, distributed layered control, guaranteed-cost LQ performance, graph products}
		
	\end{abstract}
	
	\section{INTRODUCTION}
	\label{sec:intro}
	
	Communities are formed by a large number of local and global interactions, linked by a wide range of social and economic interdependencies.
	Systematic understanding of the evolution of communities can be achieved by understanding the influence of its members on each other as well as the role of external factors.
	Such an understanding often requires an accurate model that captures the interactive behaviors \cite{proskurnikov2017tutorial}.
	With the introduction of \emph{sociogram} in 1930's \cite{moreno1934shall}, graphical models of interconnections among a group of individuals was adopted to examine the evolution of communities. 
	%
	This line of work led to several branches in social and behavioral research such as the interdisciplinary science of Social Networks Analysis (SNA).
	The development of these dynamic models has provided an intellectual bridge between the communal social and behavioral interdependencies on one hand, and techniques in system sciences such as control and estimation, on the other \cite{cremonini2017controllability}.
	In the meantime, the advent of large-scale modeling techniques due to complexities of the interdependencies, increasing population size, and the corresponding datasets, have led to the need to revisit algorithms and solution strategies for network-level control and estimation.
		
	In spite of the complexity and unpredictability of large-scale social interactions, characterizing reliable models for these interactions are promising in cases where prior knowledge about the underlying structures of these systems is available.
	In particular, for certain types of social networks, the layering structure allows a compositional approach for the mathematical representation of the system.
	The layering structure in a social network can be induced by a variety of motives such as the presence of distinct social types, geographical coordinates, and financial or political ties.
	The idea of compositional study of a layered system can be compared to distributed systems analysis in the sense that the problem is split into manageable subproblems that can be subsequently solved independently \cite{BAKULE200887,borrelli2008distributed}.
	%
	For example, decomposition of consensus-type networks leads to examining the protocol for each layer~\cite{nguyen2007factorization}.
	Furthermore, \cite{chapman2014controllability} provides a controllability and observability analysis on large-scale composite networked systems based on their factors.
	
	Despite the many advantages of a decompositional approach, the high dimension of the system poses new challenges primary due to the layers' uncertainty as well as perturbations to the layering structure as a whole \cite{ghaedsharaf2017eminence}.
	In the context of social networks these uncertainties may be due to inaccurate modeling of the nature of the interactions,\footnote{For example, due to mis-classification of antagonistic interactions in signed networks \cite{altafini2013consensus}.} as well as whether or not two social entities are directly interdependent.
	Such uncertainties pose difficulties for the control and estimation of such systems.
	As an example, the adoption of a linear quadratic (LQ) theory in social networks, is not only hindered by high dimensionality, but also by inherent model uncertainties. 
	As a result, the strong robustness properties of say, the Linear Quadratic Regulator (LQR) approach, can vanish where small changes to the system parameters lead to instabilities.
	In this paper, we present a compositional method to characterize  performance guarantees on layered social networks with model uncertainties.
	%
	The corresponding distributed analysis and control presented in this work is closely related to \cite{chapman2017data}, where a composite LQR solution is derived from the parameters of the two layers.
	In this work, we obtain sufficient conditions for the robust stability of the composite network based on a layered control mechanism.

	The paper is organized as follows:
	In \S\ref{sec:math-prel}, we provide the mathematical background for our work.
	We introduce the problem setup in \S\ref{sec:prob-setup};  \S\ref{sec:analysis} provides our analysis for synthesizing the controller that leads to performance guarantees on layered uncertain network models.
	We conclude the paper in \S\ref{sec:sims} with an illustrative example over an interdependent social network and concluding remarks are provided in \S\ref{sec:conclusion}.
	
	
	\section{Mathematical Preliminaries}
	\label{sec:math-prel}
	
	A square matrix $N\in\mathbb{R}^{n\times n}$ is \emph{symmetric} if $N^{\top}=N$;
on the other hand, a matrix-valued function $f(N)$ is \emph{symmetric} if $[f(N)]^{\top} = f(N)$.
	The identity matrix is denoted by $\textsc{I}_n$.
	$|T|$ denotes the entry-wise nonnegative version of the matrix $T$.
	For any operator $\odot$, we define $\overset{k}{\underset{i=1}{\odot}} t_i= t_1\odot\dots\odot t_k$.	
	The notation $N\succ0$ ($\succeq0$) is used when $N$ is a positive-(semi) definite matrix, i.e., $x^{\top}Nx>0$ ($\geq0$) for all $x\neq0$; $A\succ B$ if $A-B\succ0$.
	By $P=Q\Lambda Q^{\top}$, we denote the eigendecomposition of the symmetric matrix $P$, where $Q$ and $\Lambda$ represent the eigenvectors and eigenvalues of $P$, respectively.
	The \emph{Cholesky decomposition} of the symmetric matrix $F\succ0$ is given by $F=LL^{\top},$ where $L$ is lower-triangular with real, positive diagonal entries.
	We call $(A,B)$ \emph{controllable}, if and only if the controllability matrix $\mathcal{C}=[B\quad AB\ \dots\ A^{n-1}B]$ has full-rank. On the other hand, $(A,C)$ is \emph{observable} if and only if the pair $(A^{\top},C^{\top})$ is controllable.
	
	A \emph{graph} is characterized by the 3-tuple $\mathcal{G}=(\mathcal{V},\mathcal{E},\mathcal{W})$, where $\mathcal{V}$ is the set of nodes, $\mathcal{E}\subseteq \mathcal{V}\times\mathcal{V}$ denotes the set of edges, and $\mathcal{W}$ consists of weights assigned to edges.
	An edge is said to exit $i$ for $j$ if $(i,j)\in \mathcal{E}$ with edge weight $w_{ij}\in \mathcal{W}$.
	The adjacency matrix of $\mathcal{G}$, denoted by $\mathcal{A(\mathcal{G})}$ is the matrix $[\mathcal{A(\mathcal{G})}]_{ij}=w_{ij}$ when $(i,j)\in \mathcal{E}$ and $[\mathcal{A(\mathcal{G})}]_{ij}=0$ otherwise.
	The graph \emph{Laplacian} is defined as $\mathcal{L} = \mathcal{D} - \mathcal{A}$, where $\mathcal{D}$ is the diagonal \emph{degree} matrix with $\mathcal{D}_{ii} = \sum_{j:(i,j)\in\mathcal{E}} \mathcal{A}_{ij}$.\footnote{The notational dependency of the adjacency, degree, and Laplacian matrices on the graph $\mathcal{G}$ will be suppressed when implicit in the context.}
	The consensus dynamics is defined as $\dot{x} = -\mathcal{L}x$.
	
	The Cartesian product of $m$ graphs
	is denoted by $\mathcal{G}=\overset{m}{\underset{i=1}{\msquare}} \mathcal{G}_i$ where the vertex set of $\mathcal{G}$ has the form $\mathcal{V}_{\mathcal{G}}=\mathcal{V}_1\times\dots\times\mathcal{V}_m$ and the nodes $P_v=(v_1,\dots,v_m)$ and $P_u=(u_1,\dots,u_m)$ are connected if and only if there exists some $i$ such that $(v_i,u_i)\in\mathcal{E}_i$ and $v_j=u_j$ for $j\neq i$ \cite{imrich2000product}.

	The Kronecker product of $A\in\mathbb{R}^{p_1\times q_1}$ and $B\in\mathbb{R}^{p_2\times q_2}$ is an $p_1q_1\times p_2q_2$ matrix denoted by,
	\begin{align*}
		A\otimes B = \begin{bmatrix}
			a_{1,1}B & a_{1,2}B & \dots  & a_{1,q_1}B \\
			a_{2,1}B & a_{2,2}B & \dots  & a_{2,q_1}B \\
			\vdots  & \vdots  & \ddots & \vdots  \\
			a_{p_1,1}B & a_{p_1,2}B & \dots  & a_{p_1,q_1}B
		\end{bmatrix}.
	\end{align*}
	Some important properties of the Kronecker product include the \emph{mixed-product} property, $(A\otimes B)(C\otimes D)=AC\otimes BD$, \emph{distributivity}, $A\otimes (B+C)=(A\otimes B)+(A\otimes  C)$, and \emph{associativity}, $A\otimes (B\otimes C)=(A\otimes B)\otimes C$.
	Moreover, $(\overset{m}{\underset{i=1}{\otimes}} R_i)^{\top}=\overset{m}{\underset{i=1}{\otimes}} R_i^{\top}$ and $(\overset{m}{\underset{i=1}{\otimes}} T_i)^{-1}=\overset{m}{\underset{i=1}{\otimes}} T_i^{-1}$ for invertible $T_i$'s.
	The Kronecker sum is defined on square matrices $M\in\mathbb{R}^{m\times m}$ and $N\in\mathbb{R}^{n\times n}$ as $M\oplus N=M\otimes I_n+I_m\otimes N$.
	The Kronecker product preserves positive (semi-) definiteness.
	
	\section{Problem Setup}
	\label{sec:prob-setup}
	
	%
	Specific classes of large-scale social networks can be modeled, at least approximately, via a layered structure representing interdependent subsystems.
	One may then aim to characterize the properties of the system via those of its factors or layers.
	This decompositional approach is effective for various classes of social and economic networks, where for example, inter-nodal influences among distinct groups lead to opinions on a sequence of issues \cite{jia2015opinion} (Fig. \ref{fig:socialNetworks}).
	The evolutionary study of interconnections among political parties in elections or the investigation of financial ties between different branches of an international organization are two examples of the layered structures in behavioral sciences.
	\begin{figure}[H]
		\centering
		\includegraphics[scale=0.33]{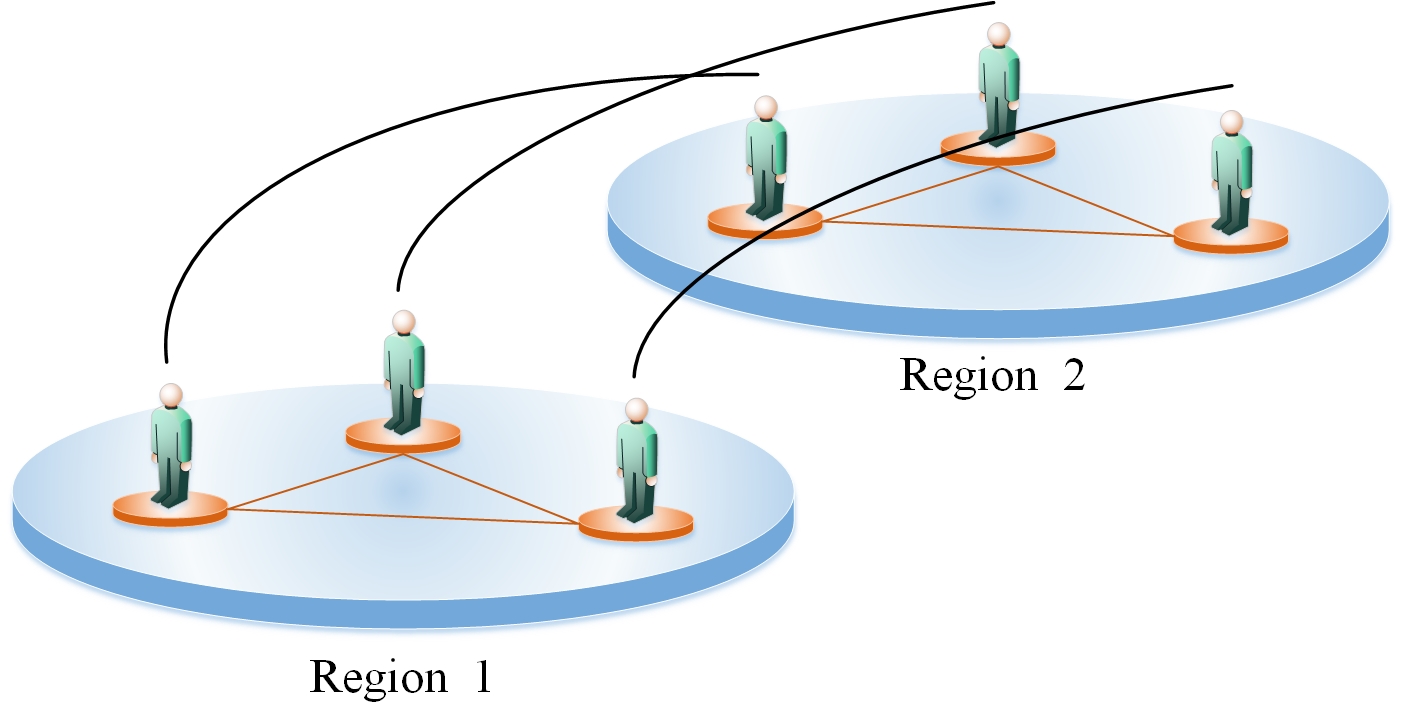}
		\caption{A example of a layered social networks due to geographical distributions}
		\label{fig:socialNetworks}
	\end{figure}
	%
	This types of system representation make it possible to embed more structure into the system and use this embedding to simplify the subsequent computational and theoretic analysis.
	%
	
	\subsection{Dynamics of each layer of the network}
		
	We assume that the evolution of opinions is captured by Taylor's model of influenced attitude change \cite{taylor1968towards}.
	The model considers the change in attitudes of a set of individuals as a result of influence processes within the set, as well as the exposure to external sources.	
	Based on this model, the opinion dynamics of an individual $p$ in a networked system with $n$ agents and $m$ external inputs can be represented as,
	\begin{align*}
		\dot{y}_p(t) = \sum_{q=1}^n a_{pq} (y_q(t)-y_p(t)) + \sum_{k=1}^m b_{pk} (s_k-y_p(t)),
	\end{align*}
	where $y_p$ is the state of $p$, $s_k$ is the $k$th external input, $a_{pq}$ captures the influence between agents $p$ and $q$ and $b_{pk}$ defines the interaction between agent $p$ and the $k$th external (static) input source.
	If the individual $r$ is not directly influenced by an external input, $b_{rk}=0$ for all $k$.
	Particularly, the input to a social organization may be due to a stationary source of communication such as mass media or an influential administrative center.
	In the matrix form, Taylor's dynamical model for layer $i$ of the network assumes the form,
	\begin{align*}
		\dot{x}_i(t) = A_ix_i(t) + B_iu_i(t) \hspace{5mm} \forall i\in\{1,2,\dots,m\},
	\end{align*}
	where $A_i\in\mathbb{R}^{n_i\times n_i}$ is equivalent to graph \emph{Laplacian},\footnote{The notation is due to the applicability of the methods in this paper for any general linear dynamics and shall not be confused with the adjacency matrix $\mathcal{A}$ of the network.} capturing the difference in attitudes and $B_i\in\mathbb{R}^{n_i\times p_i}$ defines the control ``knob" for the external inputs.	
	Following \cite{chapman2014controllability}, for this type of dynamics,
	\begin{align*}
		A(\overset{m}{\underset{i=1}{\msquare}} \mathcal{G}_i) = \overset{m}{\underset{i=1}{\oplus}} A_i,
	\end{align*}
	where for simplicity $A_i = A(\mathcal{G}_i)$.
	Then the overall network is formed as,
	\begin{align}
		\dot{\vec{x}}(t) = \big( \overset{m}{\underset{i=1}{\oplus}} A_i \big) \vec{x}(t) + \big( \overset{m}{\underset{i=1}{\otimes}} B_i \big) \vec{u}(t),
		\label{eq:overallDynamics1}
	\end{align}
	where $\overset{m}{\underset{i=1}{\oplus}} A_i \in \mathbb{R}^{\overset{m}{\underset{i=1}{\prod}} n_j \times \overset{m}{\underset{i=1}{\prod}} n_j}$ and $\overset{m}{\underset{i=1}{\otimes}} B_i \in \mathbb{R}^{\overset{m}{\underset{i=1}{\prod}} n_j \times \overset{m}{\underset{i=1}{\prod}} p_j}$.
	This dynamics can also be formulated in discrete-time as well~\cite{chapman2014controllability}.
	
	\subsection{Guaranteed LQ Performance}
	\label{sec:guaranteedLQ}
		
	Perturbations can be induced in social networks due to distortions in existence, nature, or intensity of interactions among the individuals.
	This model uncertainty can eventually lead to instabilities in a social influence model.
	%
	Unstable behavior in social networks generally have unfavorable ramifications such as the advent of clustering or community cleavage \cite{de2017controllability,askarzadeh2017stability,friedkin2015problem}.

	In this work, we employ the LQ theory as a potential methodology to design state-feedback controllers for systems with layered structures.
	%
	LQ methods have been applied in the literature for the control of large-scale systems and social networks \cite{xue2016optimal,tembine2013robust}.
	In general, the applicability of the LQ framework is reasonable when the resources used for
	social influence are restricted.\footnote{For example, when there is cap on the advertisement budget.}
	In such a setting, an LQ regulator can be used to attenuate the effect of the undesirable
	external influences through minimal adjustments in the control variables.
	%
	However, it is well-known that the stability margins of the LQ design do not guarantee robustness to variations in system parameters \cite{chang1972adaptive}.
	It is thus desirable to enhance the LQ design in order to obtain guarantees on the stability
	and performance of the system.
	One approach to achieve this is through extending the \emph{Algebraic Riccati Equation} (ARE); the baseline
	form of the setup assumes the form,
	\begin{align}
		A^{\top}P + PA^{\top} + Q - PBR^{-1}B^{\top}P = 0,
		\label{eq:ARE}
	\end{align}
	for given $Q\succeq 0, R\succ 0$, leading to the LQR optimal controller as $K=R^{-1}B^{\top}P$.
	It is known that robustness to variations in system parameters can be handled by an additional term in \eqref{eq:ARE}.
	We will utilize this methodology to ensure guarantees on the large-scale system performance in presence of uncertainties.		
	Accordingly, appending the uncertainties to the dynamics in \eqref{eq:overallDynamics1} results in,
	\begin{align*}
		\dot{\vec{x}}(t) = \overset{m}{\underset{i=1}{\oplus}} \big( A_i + \Delta A_i \big) \vec{x}(t) + \big( \overset{m}{\underset{i=1}{\otimes}} B_i \big) \vec{u}(t),
	\end{align*}
	where $\Delta A_i$ denotes the uncertainty of the model in layer $i$.
	There are many different structures suggested for $\Delta A_i$ in the literature \cite{dorato1995linear}.
	One common choice of these structured perturbations is,
	\begin{align}
		\Delta A = \sum_{j=1}^d w_j \tilde{A}_j,
		\label{eq:perturbation}
	\end{align}
	for given $\tilde{A}_j$.
	%
	Nevertheless, the results in the paper are derived for the general form of the uncertainty $\Delta A$ of each layer.
	
	%
	The layered structure is one example where compositional control is feasible by applying similar inputs to the network layers.
	In this case, the generalized input matrix can be written as,
	\begin{align*}
		B_{\otimes} = B_1\otimes I_{n_2}\otimes\dots\otimes I_{n_m}.
	\end{align*}
	This assumption helps reduce the intra-layered couplings.
	Hence the main analysis in this work is building upon the following generalized dynamics,
	\begin{align}
		\dot{\vec{x}}(t) = \overset{m}{\underset{i=1}{\oplus}} \big( A_i + \Delta A_i \big) \vec{x} + B_{\otimes} \vec{u}(t).
		\label{eq:overallDynamics3}
	\end{align}
	Our goal is to find a generalized structured controller to achieve an upper bound on the LQR performance index for system \eqref{eq:overallDynamics3}.

	
	\section{Analysis}
	\label{sec:analysis}
	
	In this section we propose a framework for guaranteed performance design for the $m$-layered dynamical system \eqref{eq:overallDynamics3} using an LQ approach.	
	To make the paper self-contained, we restate a theorem from \cite{dorato1995linear} that our main result is built upon.
	We then generalize the sufficient conditions for the layered case.
	
	
%
%
	\vspace{2mm}
	
	\begin{theorem}
		\label{thm:initialControl}
		
		\cite{dorato1995linear} Consider the perturbed dynamical system,
		\begin{align*}
			\dot{x} = (A + \Delta A)x + Bu,
		\end{align*}
		and define the quadratic performance measure,
		\begin{align}
			J=\int_0^{\infty} \big( x^{\top}Qx + u^{\top}Ru \big) dt.
			\label{eq:cost}
		\end{align}
		Let $x_0$ be the initial state and $P\succ 0$ be the solution to the modified ARE,
		\begin{align}
			A^{\top}P + PA + Q - PBR^{-1}B^{\top}P + \mathcal{U}(P) = 0,
			\label{eq:riccatiRootControl}
		\end{align}
		where $\mathcal{U}(.)$ is a positive symmetric function for which,
		\begin{align}
			\Delta A^{\top}P + P\Delta A \preceq \mathcal{U}(P).
			\label{eq:ineq1}
		\end{align}
		Then the feedback control law defined as $u=-Kx$ leads to $J\leq x_0^{\top}Px_0,$ where $K=R^{-1}B^{\top}P$.
	\end{theorem}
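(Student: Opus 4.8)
The plan is to use the candidate Lyapunov function $V(x)=x^{\top}Px$ and show that along closed-loop trajectories its time derivative dominates the running cost, i.e. $\dot V \le -(x^{\top}Qx + u^{\top}Ru)$; integrating this inequality then yields the claimed bound. First I would substitute the feedback law $u=-Kx=-R^{-1}B^{\top}Px$ into the perturbed dynamics to obtain the closed-loop matrix $A_{cl}=A+\Delta A - BR^{-1}B^{\top}P$, and compute
\begin{align*}
\dot V &= x^{\top}\big(A_{cl}^{\top}P + P A_{cl}\big)x \\
&= x^{\top}\big[A^{\top}P + PA + \Delta A^{\top}P + P\Delta A - 2PBR^{-1}B^{\top}P\big]x,
\end{align*}
where I have used symmetry of $P$ and $R^{-1}$ to identify $(BR^{-1}B^{\top}P)^{\top}P = PBR^{-1}B^{\top}P$.

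Next I would apply the defining inequality \eqref{eq:ineq1}, namely $\Delta A^{\top}P + P\Delta A \preceq \mathcal{U}(P)$, to bound the uncertainty terms, giving $\dot V \le x^{\top}[A^{\top}P + PA + \mathcal{U}(P) - 2PBR^{-1}B^{\top}P]x$. The key cancellation then comes from the modified ARE \eqref{eq:riccatiRootControl}: solving it for $A^{\top}P + PA + \mathcal{U}(P) = -Q + PBR^{-1}B^{\top}P$ and substituting reduces the bracket to $-Q - PBR^{-1}B^{\top}P$. Recognizing that for the chosen gain one has $u^{\top}Ru = x^{\top}PBR^{-1}B^{\top}Px$, this is exactly the desired dissipation inequality $\dot V \le -(x^{\top}Qx + u^{\top}Ru)$.

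Finally I would integrate from $0$ to $T$ to obtain $\int_0^{T}(x^{\top}Qx + u^{\top}Ru)\,dt \le x_0^{\top}Px_0 - x(T)^{\top}Px(T) \le x_0^{\top}Px_0$, where the last step uses $P\succ 0$. Since the integrand is nonnegative (as $Q\succeq 0$ and $R\succ 0$), the left-hand side is nondecreasing and bounded above in $T$, so letting $T\to\infty$ the monotone limit exists and $J\le x_0^{\top}Px_0$, which is precisely \eqref{eq:cost} bounded as claimed.

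The computation is essentially a chain of substitutions, so I do not expect a deep obstacle; the one place requiring care is the passage to the infinite horizon. Rather than asserting $x(T)\to 0$ (which would require a separate closed-loop stability argument), I would lean on the monotone-bounded convergence above, which needs only the finite-horizon inequality together with positive definiteness of $P$. I expect this convergence justification to be the main subtlety worth stating explicitly.
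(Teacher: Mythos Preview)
The paper does not actually prove this theorem; it is restated verbatim from the cited reference \cite{dorato1995linear} without proof, so there is no ``paper's own proof'' to compare against. Your argument is the standard Lyapunov/dissipation proof from the guaranteed-cost literature and is correct as written: the substitution of the modified ARE into $\dot V$ and the identification $u^{\top}Ru = x^{\top}PBR^{-1}B^{\top}Px$ are exactly what is needed, and your handling of the $T\to\infty$ limit via monotone boundedness (rather than first establishing asymptotic stability) is the clean way to close the argument.
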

	
	\vspace{2mm}
	
	
%

	In \Cref{thm:initialControl}, choosing $\mathcal{U}$ is dictated by a trade-off between the complexity of the design and analytical properties of the solution strategy.
	The choice, however, depends on the nature of the perturbations.
	For instance, the structure given in \eqref{eq:perturbation} implies that,
	\begin{align*}
		\Delta A^{\top} P + P \Delta A
		&= \sum_{j=1}^d w_j \big( \tilde{A}_j^{\top} P + P \tilde{A}_j \big).
	\end{align*}
	One suggested form of $\mathcal{U}$ induced by this type of perturbation is \cite{chang1972adaptive},
	\begin{align}
		\label{eq:defineU}
		\mathcal{U}(P) = \sum_{j=1}^d Q_j|\Lambda_j|Q_j^{\top},
	\end{align}
	where $Q_j$ and $\Lambda_j$ are obtained from the eigendecomposition of the symmetric matrix $\tilde{A}_j^{\top} P + P \tilde{A}_j$ as,
	\begin{align*}
		\tilde{A}_j^{\top} P + P \tilde{A}_j = Q_j\Lambda_jQ_j^{\top}.
	\end{align*}
	It is straightforward to check that \eqref{eq:ineq1} holds under such a definition of $\mathcal{U}$.
	Nonetheless, the analysis in this paper is not limited to any specific types of $\mathcal{U}$.

	\vspace{2mm}
	
	\begin{definition}
		\label{def:notation}
		Given matrices $D$ and $C_i$ for $i\in\{1,2,\dots,\ell\}$, we define,
		\begin{align}
			C_{D}^{\otimes_{k,\ell}} = C_1\otimes\dots\otimes C_{k-1}\otimes D\otimes C_{k+1}\otimes\dots\otimes C_\ell,
			\label{eq:define}
		\end{align}
		i.e., $C_{D}^{\otimes_{k,\ell}}$ replaces $C_k$ with $D$ in $\overset{\ell}{\underset{i=1}{\otimes}} C_i$.
	\end{definition}
	\vspace{2mm}
	
	\Cref{def:notation} is followed by some useful properties that is presented in the following.
	
	\vspace{2mm}
	
	\noindent \textbf{Proposition 1.} Given Definition~\ref{def:notation}, the following hold:
	
	\vspace{2mm}
	
	\begin{enumerate}
		\item $X_{Y}^{\otimes_{k,\ell}} \pm X_{Z}^{\otimes_{k,\ell}} = X_{Y \pm Z}^{\otimes_{k,\ell}}$, 
		\item $(X_{Y}^{\otimes_{k,\ell}})(V_{W}^{\otimes_{k,\ell}}) = (XV)_{(YW)}^{\otimes_{k,\ell}}$,
		\item $(X_{Y}^{\otimes_{k,\ell}})^{\top} = (X^{\top})_{Y^{\top}}^{\otimes_{k,\ell}}$,
		\item $(X_{Y}^{\otimes_{k,\ell}})^{-1} = (X^{-1})_{Y^{-1}}^{\otimes_{k,\ell}}$,
	\end{enumerate}
	
	\vspace{1mm}
	
	\noindent where with a slight abuse of notation,
	\begin{align*}
		(XV)_{(YW)}^{\otimes_{k,\ell}} = X_1V_1 &\otimes\dots\otimes X_{k-1}V_{k-1} \otimes \\ & YW \otimes X_{k+1}V_{k+1} \otimes\dots\otimes X_{\ell}V_{\ell}.
	\end{align*}
	
	\begin{lemma}
		\label{lem:dynamics}
		The dynamics in \eqref{eq:overallDynamics3} can be written as
		\begin{align}
			\dot{\vec{x}} = \big( A_{\oplus} + \Delta A_{\oplus} \big) \vec{x} + B_{\otimes} \vec{u},
			\label{eq:dynamicsNew}
		\end{align}
		where $A_{\oplus} = \overset{m}{\underset{i=1}{\oplus}} A_i$ and $\Delta A_{\oplus} = \overset{m}{\underset{i=1}{\oplus}} \Delta A_i$.
	\end{lemma}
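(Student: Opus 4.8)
The plan is to observe that the lemma's entire content is a single distributivity identity for the $m$-fold Kronecker sum, since the input term $B_{\otimes}\vec{u}$ is literally unchanged between \eqref{eq:overallDynamics3} and \eqref{eq:dynamicsNew}. Matching the two state matrices therefore reduces everything to proving
\begin{align*}
\overset{m}{\underset{i=1}{\oplus}} \big( A_i + \Delta A_i \big) = \overset{m}{\underset{i=1}{\oplus}} A_i + \overset{m}{\underset{i=1}{\oplus}} \Delta A_i = A_{\oplus} + \Delta A_{\oplus}.
\end{align*}

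First I would unfold the $m$-fold Kronecker sum into its explicit summand form. Iterating the binary definition $M\oplus N = M\otimes I + I\otimes N$ together with associativity of $\otimes$ yields, for square matrices $M_1,\dots,M_m$ with $M_i\in\mathbb{R}^{n_i\times n_i}$,
\begin{align*}
\overset{m}{\underset{i=1}{\oplus}} M_i = \sum_{i=1}^m I_{n_1}\otimes\dots\otimes M_i\otimes\dots\otimes I_{n_m} = \sum_{i=1}^m I_{M_i}^{\otimes_{i,m}},
\end{align*}
where in the compact form I apply \Cref{def:notation} with every base factor $C_j$ set to the identity $I_{n_j}$, so that $M_i$ occupies the $i$-th slot. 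Substituting $M_i = A_i + \Delta A_i$ and then applying Property~1 of Proposition~1 in each slot (with $X=I$, $Y=A_i$, $Z=\Delta A_i$) splits each summand as $I_{A_i+\Delta A_i}^{\otimes_{i,m}} = I_{A_i}^{\otimes_{i,m}} + I_{\Delta A_i}^{\otimes_{i,m}}$. Regrouping the resulting $2m$ terms into $\sum_i I_{A_i}^{\otimes_{i,m}}$ and $\sum_i I_{\Delta A_i}^{\otimes_{i,m}}$ returns precisely $A_{\oplus} + \Delta A_{\oplus}$, which establishes the claim.

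There is no genuinely hard step: the statement is a bookkeeping identity recording that the Kronecker-sum structure lets the per-layer perturbations $\Delta A_i$ detach additively from the nominal terms $A_i$. The only point deserving attention is the alignment of the identity-block sizes across slots when passing from the binary definition to the $m$-fold form, so that Property~1 is invoked in the correct tensor position for each $i$. Equivalently, one could bypass \Cref{def:notation} entirely and argue by induction on $m$, using only the distributivity of $\otimes$ over matrix addition stated in \S\ref{sec:math-prel} to handle the inductive step; both routes are routine.
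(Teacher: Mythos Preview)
Your argument is correct and matches the paper's own treatment: the paper simply states that the proof is ``straightforward using induction and the properties of Kronecker products,'' and your unfolding of $\oplus_{i=1}^m$ into the explicit summand form followed by slot-wise distributivity (Property~1 of Proposition~1) is exactly that argument made explicit. There is no substantive difference between your direct expansion and the induction route you mention at the end; both are the routine verification the paper alludes to.
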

	
	\vspace{3mm}

	\noindent The proof of \Cref{lem:dynamics} is straightforward using induction and the properties of Kronecker products.
	We assume that the generalized perturbation $\Delta A_{\oplus}$ represents a structured uncertainty composed of the perturbations from each layer of the system.
	Generalization of a layer-independent perturbation or leveraging other well-known uncertainty structures are addressed for future works.
	
	
	\vspace{2mm}

	\vspace{2mm}
	
	\begin{theorem}
		\label{thm:generalizedWithControl}
		Consider the generalized dynamics in \eqref{eq:dynamicsNew}.
		Assume that $Q_1\succeq 0$, $R_1\succ0$, and symmetric positive function $\mathcal{U}_1$ is given such that
		\begin{align}
			\Delta A_1^{\top}\bar{P} + \bar{P}\Delta A_1 \preceq \mathcal{U}_1(\bar{P}),
			\label{eq:inequalityGeneral}
		\end{align}
		holds for all $\bar{P}\succ0$ and $\Delta A_1$.
		Furthermore, let $P_1\succ0$ be the solution to,
		\begin{align*}
			A_1^{\top}P_1 + P_1A_1 + Q_1 - P_1B_1R_1^{-1}B_1^{\top}P_1 + \mathcal{U}_1(P_1) = 0,
		\end{align*}
		and define,
		\begin{align*}
			F_i = A_i^{\top} M_i + M_i A_i, \qquad G_i = \Delta A_i^{\top} M_i + M_i \Delta A_i,
		\end{align*}
		where $M_i\succ 0$ is such that $F_i\preceq 0$ and $G_i\prec 0$ for all $i=2,\dots,m$.
		Then the generalized state-feedback control law $\vec{u}=-K_{\otimes}\vec{x}$ with $K_{\otimes} = K_1\otimes I_{n_2}\otimes\dots\otimes I_{n_m}$ implies that,
		\begin{align}
			\bar{J} = \int_{0}^{\infty} \big( \vec{x}^{\top}Q_{\otimes}\vec{x} + \vec{u}^{\top}R_{\otimes}\vec{u} \big) dt \ \leq \ \vec{x}_0^{\top}P_{\otimes}\vec{x}_0,
		\end{align}
		where $P_{\otimes}$, $Q_{\otimes}$, and $R_{\otimes}$ are defined as,
		\begin{align*}
			P_{\otimes} &= M^{\otimes_{1,m}}_{P_1}, \qquad R_{\otimes} = M_{R_1}^{\otimes_{1,m}}, \\
			Q_{\otimes} &= M^{\otimes_{1,m}}_{Q_1} - P_1\otimes \big( \sum_{i=2}^m M_{F_i}^{\otimes_{i,m}} \big).
		\end{align*}
	\end{theorem}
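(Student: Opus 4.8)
The plan is to reduce the claim to \Cref{thm:initialControl} applied to the composite system \eqref{eq:dynamicsNew}. That is, I would exhibit $P_\otimes$ as a positive-definite solution of a modified ARE for the pair $(A_\oplus, B_\otimes)$ with cost matrices $Q_\otimes, R_\otimes$ and a suitable symmetric positive $\mathcal{U}(\cdot)$, and then check that the prescribed gain $K_\otimes$ is exactly the induced feedback $R_\otimes^{-1}B_\otimes^\top P_\otimes$. Since Kronecker products preserve positive definiteness and $P_1, M_2, \dots, M_m \succ 0$, we have $P_\otimes = M_{P_1}^{\otimes_{1,m}} \succ 0$ and $R_\otimes = M_{R_1}^{\otimes_{1,m}} \succ 0$. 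Using the mixed-product property (and Proposition~1 to track the first slot), the $M_i$-factors collapse via $M_i^{-1}M_i = I_{n_i}$ for $i \geq 2$ while the first slot produces $R_1^{-1}B_1^\top P_1 = K_1$, giving $R_\otimes^{-1}B_\otimes^\top P_\otimes = K_1 \otimes I_{n_2}\otimes\dots\otimes I_{n_m} = K_\otimes$, as required.

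The heart of the argument is an algebraic identity obtained by expanding the Kronecker sum slot-by-slot. Writing $A_\oplus^\top$ as the sum over $i$ of the term carrying $A_i^\top$ in the $i$-th factor and identities elsewhere, and applying the mixed-product property against $P_\otimes$, I would establish
\begin{align*}
A_\oplus^\top P_\otimes + P_\otimes A_\oplus &= (A_1^\top P_1 + P_1 A_1)\otimes M_2\otimes\dots\otimes M_m \\
&\quad + P_1\otimes\Big(\sum_{i=2}^m M_{F_i}^{\otimes_{i,m}}\Big),
\end{align*}
where the first-slot term carries the layer-$1$ Lyapunov operator and each higher slot contributes $F_i = A_i^\top M_i + M_i A_i$. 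An identical collapse gives the quadratic term $P_\otimes B_\otimes R_\otimes^{-1}B_\otimes^\top P_\otimes = (P_1 B_1 R_1^{-1}B_1^\top P_1)\otimes M_2\otimes\dots\otimes M_m$. Substituting these together with $Q_\otimes$ into the left-hand side of the composite modified ARE (with $\mathcal{U}(P_\otimes) := \mathcal{U}_1(P_1)\otimes M_2\otimes\dots\otimes M_m$, which is symmetric positive), the $F_i$-terms cancel against the subtracted term in the definition of $Q_\otimes$ \emph{by construction}, leaving
\begin{align*}
\big(A_1^\top P_1 + P_1 A_1 + Q_1 &- P_1 B_1 R_1^{-1}B_1^\top P_1 \\
&+ \mathcal{U}_1(P_1)\big)\otimes M_2\otimes\dots\otimes M_m,
\end{align*}
which vanishes by the layer-$1$ Riccati equation.

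It then remains to verify the two admissibility conditions of \Cref{thm:initialControl}. For $Q_\otimes\succeq 0$: the term $M_{Q_1}^{\otimes_{1,m}}$ is positive semidefinite since $Q_1\succeq 0$ and the other factors are positive definite, while each subtracted block $P_1\otimes M_2\otimes\dots\otimes F_i\otimes\dots\otimes M_m$ has all factors but $F_i$ positive definite, so $F_i\preceq 0$ renders it negative semidefinite and its negation contributes $\succeq 0$. For the uncertainty domination, the analogous slot-by-slot expansion of $\Delta A_\oplus^\top P_\otimes + P_\otimes\Delta A_\oplus$ yields a first-slot block $\Delta A_1^\top P_1 + P_1\Delta A_1$ tensored with $M_2\otimes\dots\otimes M_m$, plus higher-slot blocks each carrying $G_i = \Delta A_i^\top M_i + M_i\Delta A_i$. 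Bounding the first-slot block by $\mathcal{U}_1(P_1)$ through \eqref{eq:inequalityGeneral} and tensoring with the positive-definite $M_2\otimes\dots\otimes M_m$ preserves the inequality, whereas $G_i\prec 0$ makes each higher-slot block negative definite. Hence $\Delta A_\oplus^\top P_\otimes + P_\otimes\Delta A_\oplus\preceq\mathcal{U}(P_\otimes)$, and \Cref{thm:initialControl} delivers $\bar{J}\leq\vec{x}_0^\top P_\otimes\vec{x}_0$.

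I expect the definiteness bookkeeping to be the main obstacle, rather than the ARE cancellation (which is essentially forced by the design of $Q_\otimes$). The argument rests on the elementary but load-bearing fact that a Kronecker product of symmetric matrices is negative (semi)definite exactly when an odd number of its factors are negative (semi)definite; this is precisely what converts the per-layer hypotheses $F_i\preceq 0$ into $Q_\otimes\succeq 0$ and $G_i\prec 0$ into the uncertainty bound. Care is also needed to keep the $P_1$-slot aligned across every term so that the cancellation in $Q_\otimes$ and the domination inequality both reduce cleanly to single-slot statements; Proposition~1's additivity and mixed-product rules for the $(\cdot)^{\otimes_{k,\ell}}$ notation are the right tool for this alignment.
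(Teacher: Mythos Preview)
Your proposal is correct and follows essentially the same approach as the paper: both reduce to \Cref{thm:initialControl} by defining the composite uncertainty bound $\mathcal{U}(P_\otimes)=\mathcal{U}_1(P_1)\otimes M_2\otimes\cdots\otimes M_m$, expanding $A_\oplus^\top P_\otimes+P_\otimes A_\oplus$ (and its $\Delta A$ analogue) slot-by-slot via the mixed-product property, and checking that the layer-$1$ Riccati equation forces the composite modified ARE to vanish while the gain collapses to $K_1\otimes I_{n_2}\otimes\cdots\otimes I_{n_m}$. Your treatment is in fact slightly more explicit than the paper's on the $Q_\otimes\succeq 0$ verification (which the paper defers to a remark after the proof via a Cholesky factorization), and you correctly identify the higher-slot terms in the $\Delta A$ expansion as $G_i$ rather than $F_i$.
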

	
	
	\begin{proof}
		We proceed by checking the conditions of \Cref{thm:initialControl} but for the layered system in \eqref{thm:generalizedWithControl}.
		To this end, we need a new definition for a symmetric positive function that generalizes $\mathcal{U}$.
		Let,
		\begin{align}
			\mathcal{V}(T_1,T_2,\dots,T_{\ell}) = T_{\mathcal{U}_1(T_1)}^{\otimes_{1,\ell}}.
			\label{eq:newFunction}
		\end{align}
		Then we note that for $m=2$,
		\begin{align*}
			\Delta A_{\oplus}^{\top}P_{\otimes}
			&= (\Delta A_1\oplus \Delta A_2)^{\top}(P_1\otimes M_2) \\
			&= (\Delta A_1^{\top}\otimes I_{n_2} + I_{n_1}\otimes \Delta A_2^{\top})(P_1\otimes M_2) \\
			&= \Delta A_1^{\top}P_1\otimes M_2 + P_1\otimes \Delta A_2^{\top}M_2 ~.
		\end{align*}
		Similarly,
		\begin{align*}
			P_{\otimes}\Delta A_{\oplus} = P_1\Delta A_1\otimes M_2 + P_1\otimes M_2\Delta A_2.
		\end{align*}
		Hence by induction, it can be shown that for any $m$,
		\begin{equation}
			\begin{aligned}
				\Delta A_{\oplus}^{\top}&P_{\otimes} + P_{\otimes}\Delta A_{\oplus} \\
				&= \big( \Delta A_1^{\top}P_1 + P_1\Delta A_1 \big)\otimes M_2\otimes\dots\otimes M_m \\
				& \hspace{40mm} + P_1\otimes \big( \sum_{i=2}^m M_{F_i}^{\otimes_{i,m}} \big) \\
				&\preceq\ \mathcal{U}_1(P_1)\otimes M_2\otimes\dots\otimes M_m \\
				&=\ \mathcal{V}(P_1,M_2,\dots,M_m),
				\label{eq:generalUpperBound}
			\end{aligned}
		\end{equation}
		where we have used \eqref{eq:inequalityGeneral} and the fact that Kronecker products preserve positive-definiteness.
		Also from Proposition 1.3, it is straightforward to show that $\mathcal{V}$ is a symmetric and positive.
		From Proposition 1.4, $R_{\otimes}^{-1}=(M^{-1})_{R_1^{-1}}^{\otimes_{1,m}}$; hence,
		\begin{align}
			\label{eq:finalGeneralTerm}
			P_{\otimes}B_{\otimes}R_{\otimes}^{-1}B_{\otimes}^{\top}P_{\otimes} = M_{P_1B_1R_1^{-1}B_1^{\top}P_1}^{\otimes_{1,m}},
		\end{align}
		which gives,
		\begin{align*}
		A_{\oplus}^{\top}P_{\otimes} &+ P_{\otimes}A_{\oplus} + Q_{\otimes} \\
		&-P_{\otimes}B_{\otimes}R_{\otimes}^{-1}B_{\otimes}^{\top}P_{\otimes} + \mathcal{V}(P_1,M_2,\dots,M_m) \\
		&= M_{A_1^{\top}P_1 + P_1A_1 + Q_1 + P_1B_1R_1^{-1}B_1^{\top}P_1 + \mathcal{U}_1(P_1)}^{\otimes_{1,m}} = 0,
		\end{align*}
		and from proposition 1.2,
		\begin{align*}
			K_{\otimes} = -R^{-1}_{\otimes} B_{\otimes}^{\top} P_{\otimes} = K_1\otimes I_{n_2}\otimes\dots\otimes I_{n_m}.
		\end{align*}
	\end{proof}
	
	\vspace{2mm}
	
	There are some remarks needed in relation to~\Cref{thm:generalizedWithControl}.
	First, from the definitions of $F_i$ and $G_i$, the perturbed dynamics $A_i+\Delta A_i$ is implicitly assumed to be stable which is not necessarily required.
	In this sense, the assumptions $M_i\succ 0$, $F_i\preceq 0$, and $G_i\prec 0$ might be restrictive.
	Indeed, we need $M_i$'s to be selected in a way that inequalities such as \eqref{eq:generalUpperBound} and $Q_{\otimes} \succeq 0$ hold which may require further assumptions on the structure of $M_i$ such as being diagonal or sparse.
	This also limits the freedom of the designer to only select the matrices $M_i$ while forming the cost of the LQR problem.
	
	Moreover, as mentioned in \Cref{sec:guaranteedLQ}, we assume a layered structure for the controller where the input to the first layer is repeated in the subsequent layers reflected into the Kronecker structure.
	While this assumption reduces system couplings, the presence of the other layers' dynamics is implicit in parameters $F_i$ and $G_i$.
	
	Finally, the proposed $Q_{\otimes}$ essentially removes the couplings of the dynamics of different layers that shows up in $A_{\oplus}^{\top}P_{\otimes} + P_{\otimes}A_{\oplus}$ in the problem formulation.
	However, it needs to be verified whether this $Q_{\otimes}$ satisfies the existence and stabilizability criteria of the LQR solution.
	To that end, it is straightforward to check that $Q_{\otimes}\succeq0$; in fact $Q_{\otimes}=L^{\top}L$ where,
	\begin{align*}
		L=\begin{bmatrix}
			D\otimes M_2^{1/2}\otimes M_3^{1/2} \otimes\dots\otimes M_m^{1/2} \\
			H\otimes N_2\otimes M_3^{1/2} \otimes\dots\otimes M_m^{1/2} \\
			H\otimes M_2^{1/2}\otimes N_3 \otimes\dots\otimes M_m^{1/2} \\
			\vdots \\
			H\otimes M_2^{1/2}\otimes M_3^{1/2} \otimes\dots\otimes N_m
		\end{bmatrix},
	\end{align*}
	and $Q_1=D^{\top}D$, $P_1=H^{\top}H$, and $A_i^{\top}M_i + M_iA_i=N_i^{\top}N_i$ by Cholesky decomposition.
	Hence to obtain the stability of the generalized LQR solution, we need the implicit assumption that $(A_{\oplus},L)$ is observable (via proper choices of $M_i$'s) and the controllability of $(A_{\oplus},B_{\otimes})$ (discussed in \cite{chapman2014controllability}).

	\vspace{1mm}
	
	\section{Compositional Synthesis for Social Networks}
	\label{sec:sims}
	
	\vspace{1mm}
	
	Layered networks can be used for modeling geographical distribution of various social types.
	In this section we implement the guaranteed-cost compositional design on a social influence network.
	This case study is inspired by Padgett's research on 15 elite families in 1282-1500, Florence \cite{padgett2011introduction} and the impact of Renaissance on Italian art and culture in the same time interval.
	The analysis provides a grouping of these families into social, political, business, and financial members
	and the interactions between families were limited to these corresponding members.
	Based on the geographical distribution and ties between these families, we leverage our methodology to model this multi-layered network.
	Inherently, modeling such an organization is challenging due to the complexity of societal interactions as well as the population size.
	We account for these types of uncertainties in parameterizing the network dynamics.
	These uncertainties can potentially lead to misclassification in the nature of connections among individuals.
	\begin{figure}[h]
		\centering
		\begin{minipage}{0.4\linewidth}
			\centering
			\subfloat[]{\label{fig:elite1}\includegraphics[width=\linewidth]{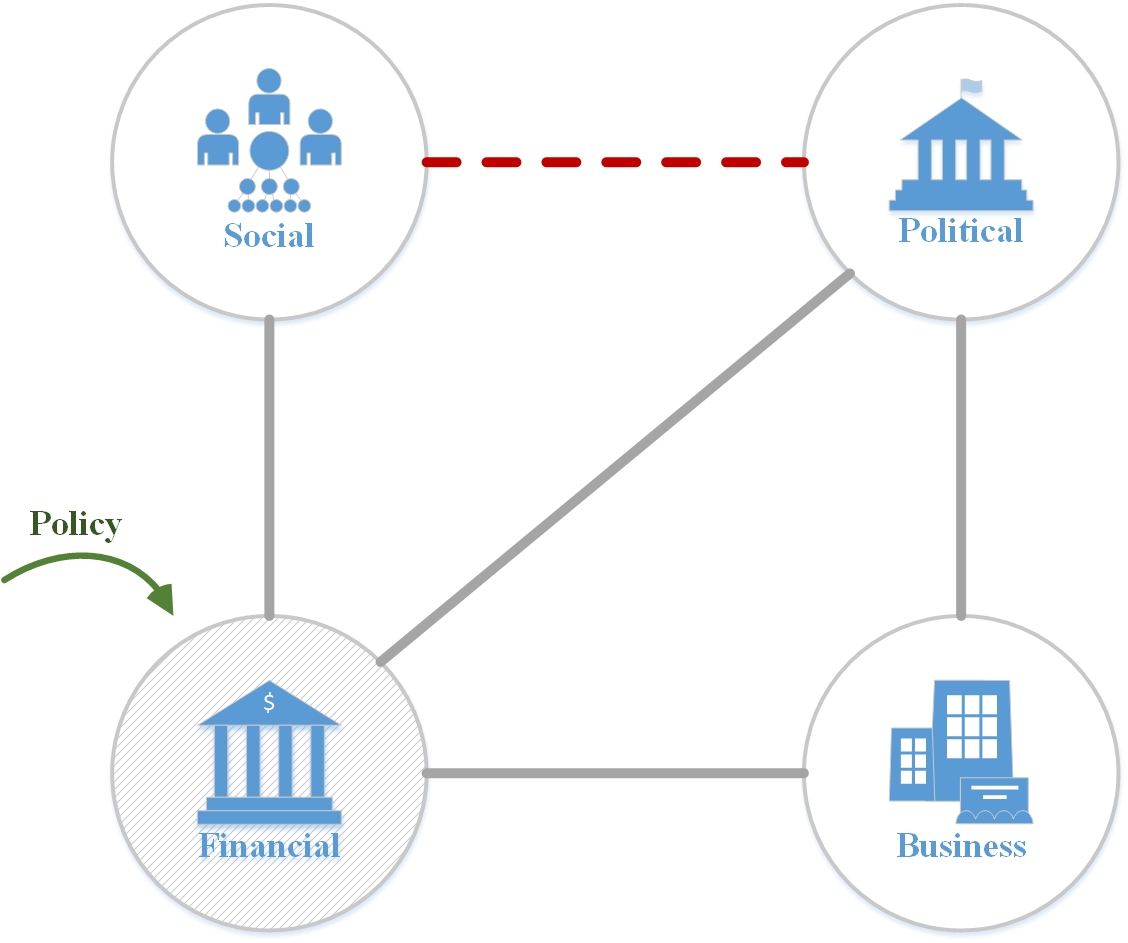}}
		\end{minipage}
		\hspace{3mm}
		\begin{minipage}{0.5\linewidth}
			\centering
			\subfloat[]{\label{fig:elite2}\includegraphics[width=\linewidth]{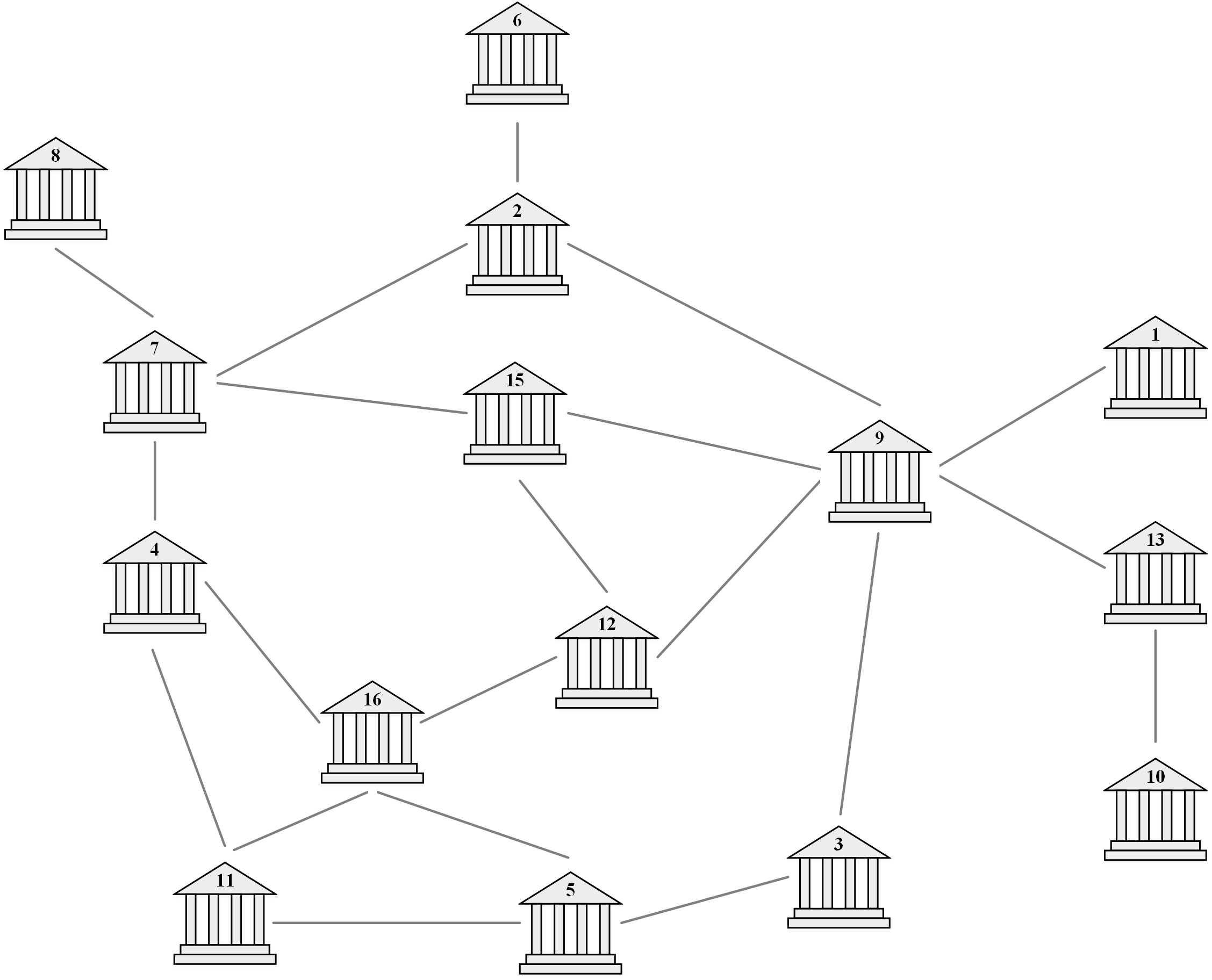}}
		\end{minipage}
		\caption{The elite family layered structures (a) interconnections within each family (b) connections among all families}
		\label{fig:plotElite}
	\end{figure}
		
	Our model contains three different layers: The first layer, $\mathcal{G}_1$, represents the structure of each family containing the four groups (Fig. \ref{fig:elite1}).
	The dashed line denotes a negative edge denoting a disagreement between social and political entities.
	An input to the financial member of each family is considered in order to both react to a change in fiscal strategy in response to Renaissance fluctuations and avoid social cleavage due to the opposition between two main members of the family.\footnote{In LQR terminology, we only have access to the financial control knob to bound the system performance. This is just a simplified assumption and the control can take place on every node.}
	All connections are assumed to be equal (not weighted).
	The perturbation to the system comes from a mistakenly flipped sign of the connection between social and financial groups.
	This results in clustering leading to the instability of the system.\footnote{For more on clustering and controllability in signed networks the reader is referred to \cite{altafini2013consensus,alemzadeh2017controllability}.}
	The Florentine elite families graph, $\mathcal{G}_2$, designates the second layer of the network (Fig. \ref{fig:elite2}).
	The third layer is inspired by the spread of Renaissance throughout other provinces of Italy such as Rome and Venice (Fig. \ref{fig:italy}).
	This extra layer signifies the computational efficiency of the method.
	\begin{figure}[h]
		\centering
		\includegraphics[scale=0.11]{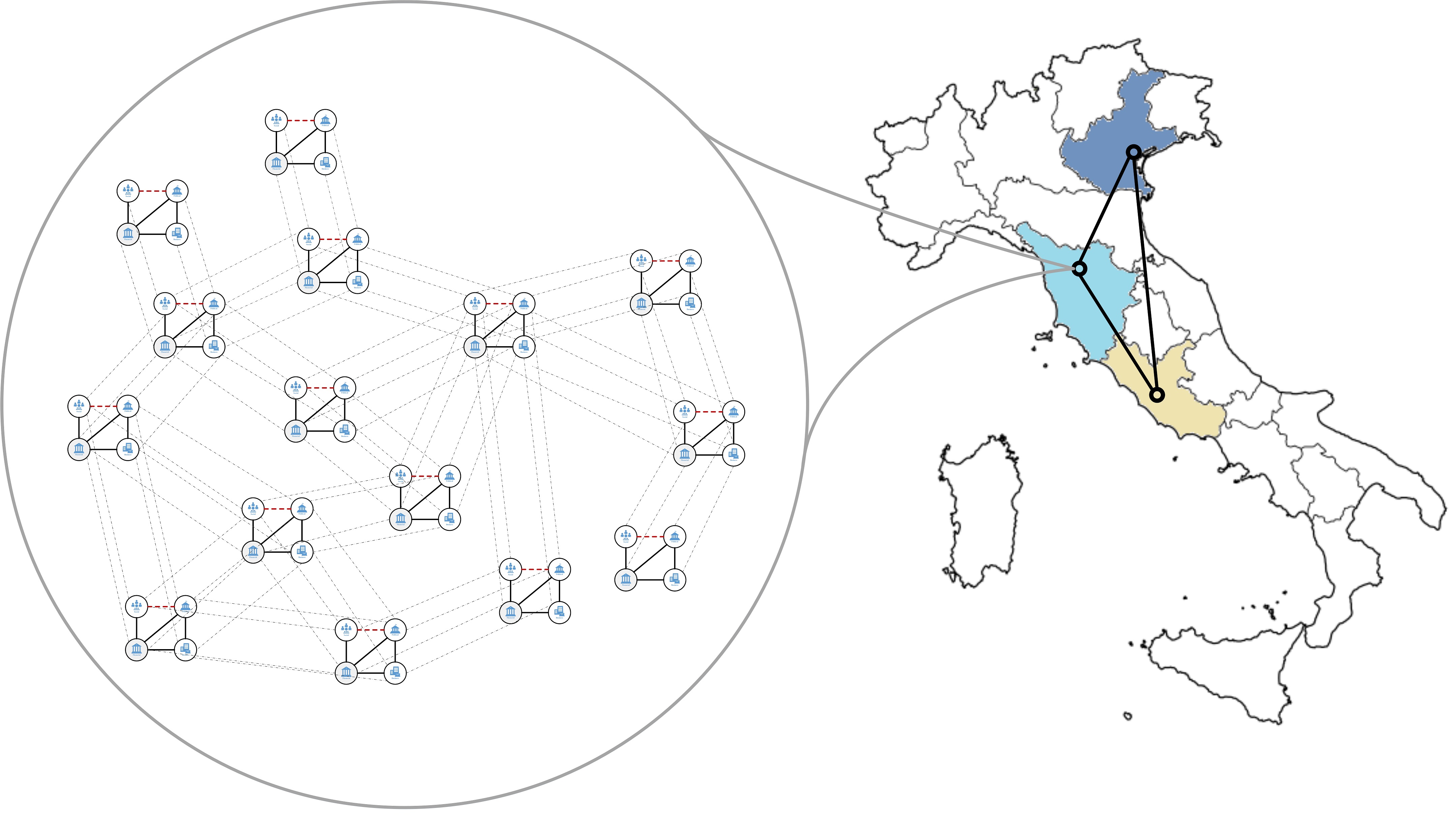}
		\caption{Composition of the elite families network layers.}
		\label{fig:italy}
	\end{figure}
	
	We use Taylor's model of opinion evolution as discussed in \Cref{sec:prob-setup}.
	In particular, we use \Cref{eq:overallDynamics3} to model this 3-layer dynamics as,
	\begin{align*}
		\dot{\vec{x}} = -\big[ \overset{3}{\underset{i=1}{\oplus}} (A_i + \Delta A_i ) \big] \vec{x} + ( \overset{3}{\underset{i=1}{\otimes}} B_i )\vec{u},
	\end{align*}
	where $A_i$ and $\Delta A_i$ denote the Laplacian and the uncertainty matrices of layer $i$.
	We assume $\Delta \mathcal{L}_2 = 0$, $\Delta \mathcal{L}_3 = 0$, and $B_1 = [0\quad 0\quad 0\quad 1]^{\top}$ reflecting the control over the financial node.
	We use \eqref{eq:perturbation} to model the perturbation with $d=1$, $w_1=2$, and $\tilde{A}_1 = e_1e_2^{\top} + e_2e_1^{\top}$, i.e., a change in the sign between social and political groups.
	\begin{figure}[h]
		\begin{minipage}{0.49\linewidth}
			\subfloat[]{\label{fig:cost1}\includegraphics[width=\linewidth]{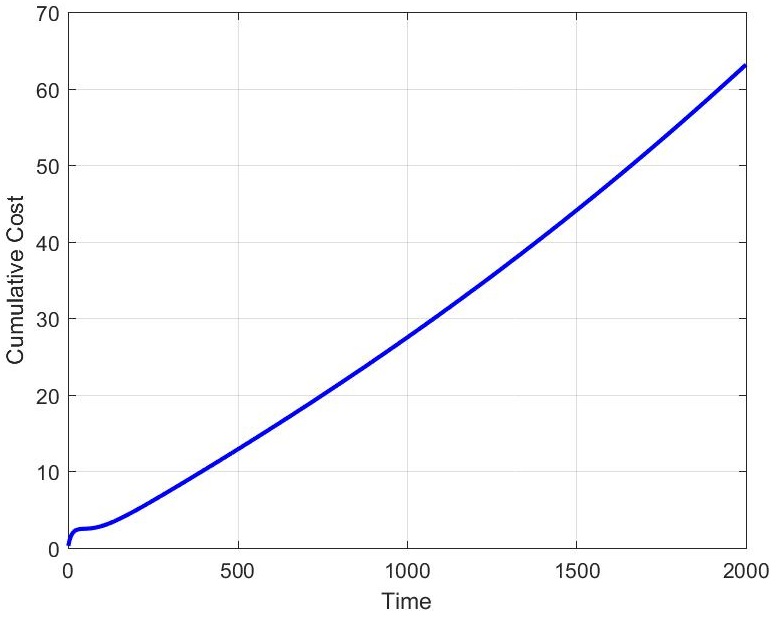}}
		\end{minipage}
		\hspace{-1mm}
		\begin{minipage}{0.49\linewidth}
			\subfloat[]{\label{fig:cost2}\includegraphics[width=\linewidth]{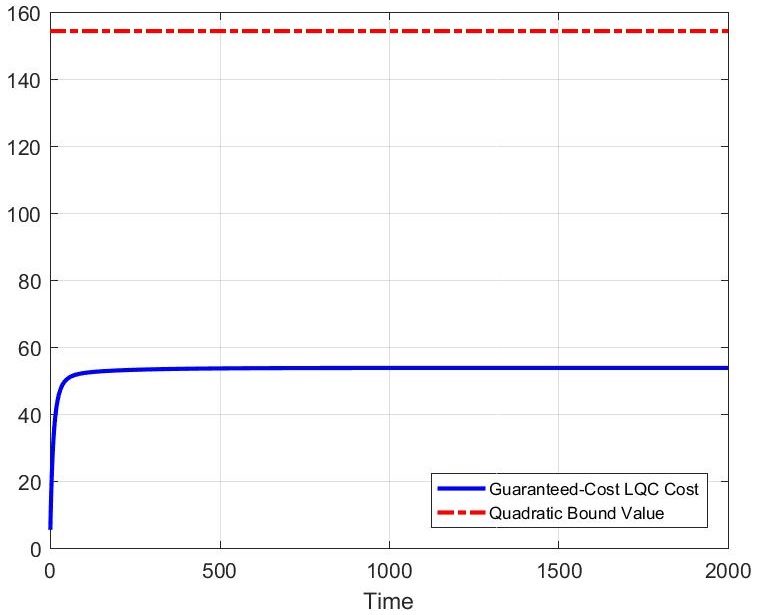}}
		\end{minipage}
		\caption{LQ system performance (a) Normal LQR algorithm (b) Guaranteed-cost LQ controller design}
		\label{fig:costPlot}
	\end{figure}
		
	Fig. \ref{fig:cost1} depicts the instability of the system when the baseline LQR algorithm is used without taking the uncertainties into account.
	Fig. \ref{fig:cost2} shows the guaranteed performance for a similar setup but with an updated LQ controller design methodology presented in \Cref{sec:analysis}.
	\Cref{lab:time} shows the time it takes to run the LQR algorithm (updated ARE in particular) based on the size of $\mathcal{G}_3$ (number of provinces).
	Similar results can also be obtained for $\mathcal{G}_1$ and $\mathcal{G}_2$.
	\begin{table}[H]
		\centering
		\resizebox{0.35\textwidth}{!}{
		\begin{tabular}{|c||c|c|c|c|}
			\hline
			Size of $\mathcal{G}_3$ & 1 & 2 & 3 & 4 \\ [0.5ex] 
			\hline
			Time (sec) & 0.3 & 2 & 216 & 419 \\ [0.5ex]
			\hline
		\end{tabular}
		}
		\caption{Computational performance for solving modified ARE for different sizes of $\mathcal{G}_3$}
		\label{lab:time}
	\end{table}
	
	\section{CONCLUSION}
	\label{sec:conclusion}
	
	In this paper we examine large-scale social networks analysis via a system-theoretic compositional approach.
	In particular, we consider the guaranteed-cost control design of specific classes of social systems with a layered structure.
	This specific structure enables the guaranteed performance control and estimation based on the factors.
	We present a compositional approach to determine these guarantees considering the uncertainties that originate from the network layers.
	The applicability of the developed theory to large-scale social networks with misclassified interdependencies is then discussed.
	Some future directions include generalization of the dynamics such that every layer can potentially contribute to the control mechanism for the system.
	One can also aim to provide a more topological structure to the design parameters $Q_{\otimes}$ and $R_{\otimes}$.

	\section*{ACKNOWLEDGMENTS}
	
	S. Alemzadeh would like to thank Airlie Chapman and Behnoosh Parsa for insightful conversations and suggestions pertaining to this work.
	The authors also acknowledge many fruitful discussions with Hesam Talebiyan, Andr{\'e}s Gonz{\'a}lez, Leonardo Due{\~n}as-Osorio, and Rassia D'Souza on the applications of control theory to social and infrastructure networks.
	
	
	\bibliographystyle{ieeetr}
	\bibliography{citations2}
	
	
\end{document}